\documentclass[11pt]{article}
\usepackage{amsmath,amssymb,amsthm,amsfonts}
\usepackage{fullpage}

\title{Explicit constants for Fej\'er-type smoothing on finite cyclic groups}
\author{Justin Grieshop}
\date{}

\theoremstyle{plain}
\newtheorem{theorem}{Theorem}[section]
\newtheorem{lemma}[theorem]{Lemma}
\newtheorem{proposition}[theorem]{Proposition}
\newtheorem{corollary}[theorem]{Corollary}

\theoremstyle{definition}
\newtheorem{definition}[theorem]{Definition}
\newtheorem{remark}[theorem]{Remark}

\newcommand{\Z}{\mathbb{Z}}
\newcommand{\C}{\mathbb{C}}
\newcommand{\R}{\mathbb{R}}  
\newcommand{\ZN}{\mathbb{Z}/N\mathbb{Z}}

\begin{document}

\maketitle

\begin{abstract}
We give a self-contained discussion of a Fej\'er-type smoothing kernel on the finite cyclic group $\ZN$, $N\ge 2$, and prove a simple $L^2\to L^\infty$ estimate with explicit constants. As an application, we obtain a smoothed discrepancy bound for mean-zero functions on $\ZN$, which may be interpreted, for instance, for indicator functions of subsets. The results are classical in spirit and close in flavor to standard estimates in discrete Fourier analysis and the theory of exponential sums. The main purpose of this note is to record the finite-group formulation with explicit norms in a way that is convenient for applications.
\end{abstract}

\section{Introduction}

Fej\'er kernels and their relatives play a central role in classical Fourier analysis and in analytic number theory, especially in connection with Ces\`aro averages and smoothed estimates; see, for example, Zygmund~\cite{Zygmund} or Montgomery--Vaughan~\cite{MontVaughan}. In the continuous setting, Fej\'er kernels provide an approximate identity with good positivity and regularity properties. On finite cyclic groups, analogous kernels can be used to smooth functions and to control various notions of discrepancy.

The goal of this short note is twofold:
\begin{itemize}
    \item to record a convenient, entirely finite-group version of a Fej\'er-type kernel on the cyclic group $\ZN$, and
    \item to prove a simple $L^2\to L^\infty$ bound with an explicit constant, leading to a smoothed discrepancy estimate for mean-zero functions.
\end{itemize}

The results are not new in principle; they are implicit in standard discrete Fourier-analytic arguments (see, for instance, \cite[Ch.~VII]{MontVaughan} or \cite[Ch.~1]{TaoVu}). However, the finite-group formulation with explicit norms appears not to be written down in one place in this exact form, and it is convenient to have a self-contained reference.

We emphasize that all arguments take place in the finite setting $\ZN$ and rely only on elementary properties of discrete Fourier transforms. No asymptotic limits are required, and all constants are explicit.

\subsection*{Acknowledgements}

The author thanks the analytic number theory community for the many standard references on discrete Fourier analysis and smoothing kernels.

\section{Notation and basic facts}

Fix an integer $N\ge 2$. We write $\ZN$ for the finite cyclic group $\Z/N\Z$, whose elements we view as integers modulo $N$. We denote by $\ell^2(\ZN)$ the space of complex-valued functions $f:\ZN\to\C$, equipped with the inner product
\[
\langle f,g \rangle = \sum_{n\in\ZN} f(n)\overline{g(n)}, \qquad \|f\|_2 := \langle f,f \rangle^{1/2}.
\]

\subsection{Convolution and the discrete Fourier transform}

For $f,g:\ZN\to\C$ we define the (circular) convolution
\[
(f*g)(n) := \sum_{m\in\ZN} f(n-m) g(m), \qquad n\in\ZN.
\]

We use the following normalization for the discrete Fourier transform (DFT).

\begin{definition}[DFT on $\ZN$]
For $f\in\ell^2(\ZN)$, define its Fourier transform $\widehat f:\ZN\to\C$ by
\[
\widehat f(k) := \sum_{n\in\ZN} f(n) e^{-2\pi i kn/N}, \qquad k\in\ZN.
\]
Then the inversion formula reads
\[
f(n) = \frac{1}{N}\sum_{k\in\ZN} \widehat f(k) e^{2\pi i kn/N}, \qquad n\in\ZN,
\]
and Parseval's identity is
\[
\sum_{n\in\ZN} |f(n)|^2 = \frac{1}{N}\sum_{k\in\ZN} |\widehat f(k)|^2.
\]
\end{definition}

Convolution corresponds to pointwise multiplication of Fourier transforms:
\[
\widehat{f*g}(k) = \widehat f(k) \widehat g(k) \qquad (k\in\ZN).
\]

We also write
\[
\mathbb{E}_{n\in\ZN} f(n) := \frac{1}{N}\sum_{n\in\ZN} f(n)
\]
for the normalized average. A function $f$ has \emph{mean zero} if $\mathbb{E}_{n} f(n)=0$, equivalently $\widehat f(0)=0$ under the above normalization.

\section{A triangular Fej\'er-type kernel on $\ZN$}

In the classical theory on the circle, the Fej\'er kernel can be written as a squared-sinc function in the spatial variable, whose Fourier coefficients form a triangle in frequency; see \cite[\S~III.1]{Zygmund}. On $\ZN$ we will use the \emph{dual picture}: a finite-support triangular kernel in the group (spatial) variable, whose Fourier transform is a squared-sinc-type symbol. This is more convenient when one wants strictly local smoothing in the group variable.

\begin{definition}[Triangular kernel on $\ZN$]
Let $N\ge2$ and let $r$ be an integer with $1\le r\le \lfloor N/2\rfloor$. For $n\in\ZN$ let $|n|$ denote the least absolute value representative in $\{-\lfloor N/2\rfloor,\dots,\lfloor N/2\rfloor\}$. Define $F_r:\ZN\to[0,\infty)$ by
\begin{equation}\label{eq:Fejer-def}
F_r(n) := \frac{1}{r}\,\max\Bigl(0,\,1-\frac{|n|}{r}\Bigr),\qquad n\in\ZN.
\end{equation}
Equivalently, $F_r$ is supported on $\{|n|\le r-1\}$ and has the triangular shape
\[
F_r(n) = \frac{1}{r}\Bigl(1-\frac{|n|}{r}\Bigr) \quad\text{for } |n|\le r-1, \qquad
F_r(n)=0 \ \text{otherwise}.
\]
\end{definition}

Since $r\le N/2$, this definition is unambiguous on $\ZN$ (the support does not wrap around the circle). A direct computation shows that $F_r$ is nonnegative, even, and has unit mass.

\begin{lemma}[Basic properties of $F_r$]\label{lem:Fejer-basic}
Let $N\ge2$ and $1\le r\le \lfloor N/2\rfloor$. Then $F_r:\ZN\to\R_{\ge0}$ satisfies:
\begin{enumerate}
    \item (Support) $F_r(n)=0$ for $|n|\ge r$.
    \item (Symmetry) $F_r(-n)=F_r(n)$ for all $n\in\ZN$.
    \item (Unit mass) $\displaystyle\sum_{n\in\ZN} F_r(n)=1$.
\end{enumerate}
\end{lemma}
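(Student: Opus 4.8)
The plan is to verify the three properties in turn. All are elementary, and the only genuinely structural observation is a convolution identity that makes the unit-mass claim transparent while foreshadowing the Fourier computation promised in the preceding discussion.

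For the support claim (1), I would argue directly from \eqref{eq:Fejer-def}: if $|n|\ge r$, then $|n|/r\ge 1$, so $1-|n|/r\le 0$ and hence $\max(0,\,1-|n|/r)=0$. Thus $F_r(n)=0$ whenever $|n|\ge r$. For symmetry (2), the key point is that the least-absolute-value representative satisfies $|{-n}|=|n|$: the symbol set $\{-\lfloor N/2\rfloor,\dots,\lfloor N/2\rfloor\}$ is symmetric about the origin, so negation sends representatives to representatives of the same absolute value. Since $F_r(n)$ depends on $n$ only through $|n|$, the claim follows. The sole ambiguity, when $N$ is even and $|n|=N/2$, is harmless: there $n$ and $-n$ coincide in $\ZN$, and $F_r$ vanishes anyway because $N/2\ge r$.

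For the unit mass (3) I would give the convolution argument, which I expect to reuse. Set $\chi:=\mathbf{1}_{\{0,1,\dots,r-1\}}$ and $\tilde\chi(n):=\chi(-n)$, both viewed on $\ZN$ (no wraparound since $r\le N/2$). A short count of the autocorrelation gives $(\chi*\tilde\chi)(n)=\max(0,\,r-|n|)$, so that $F_r=r^{-2}(\chi*\tilde\chi)$. Summing over $\ZN$ and using that the total mass of a convolution factors, one finds $\sum_{n}(\chi*\tilde\chi)(n)=\bigl(\sum_n\chi(n)\bigr)\bigl(\sum_n\tilde\chi(n)\bigr)=r^2$, whence $\sum_n F_r(n)=1$. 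Alternatively one can verify (3) by hand: by (1) and (2) the sum equals $\tfrac{1}{r}+\tfrac{2}{r}\sum_{j=1}^{r-1}(1-j/r)$, and evaluating the resulting arithmetic series collapses to $1$.

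There is no real obstacle here; the statement is routine, and the only point deserving care is the well-definedness and symmetry of the representative $|n|$ when $N$ is even, which I have flagged above. I would nonetheless emphasize the identity $F_r=r^{-2}(\chi*\tilde\chi)$ not because it is needed for this lemma, but because it reveals $\widehat{F_r}=r^{-2}\,|\widehat\chi|^2\ge 0$, exactly the nonnegativity of the squared-sinc-type symbol anticipated in the discussion preceding the definition, and the structure I expect to drive the $L^2\to L^\infty$ bound.
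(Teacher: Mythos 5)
Your proof is correct. Parts (1) and (2) match the paper, which simply declares them immediate from the definition; your extra care about the representative $|n|$ when $N$ is even and $|n|=N/2$ is a legitimate (if minor) refinement, and your observation that $F_r$ vanishes there because $r\le N/2$ closes that edge case cleanly. For part (3) you take a genuinely different primary route: you write $F_r=r^{-2}(\chi*\tilde\chi)$ with $\chi=\mathbf{1}_{\{0,\dots,r-1\}}$ and use the fact that the total mass of a convolution factors, so $\sum_n(\chi*\tilde\chi)(n)=r^2$. The paper instead sums the triangle directly, reducing to the arithmetic series $\sum_{n=1}^{r-1}(1-n/r)=\frac{r-1}{2}$ --- exactly the alternative you sketch at the end. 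Your convolution argument is essentially the content of the paper's later Lemma~\ref{lem:autocorr} (the autocorrelation representation), which the paper reserves for computing the Fourier symbol; importing it here makes the unit-mass claim structural rather than computational and, as you note, gives nonnegativity of $\widehat{F_r}$ for free. The trade-off is that you must still justify the counting identity $(\chi*\tilde\chi)(n)=\max(0,r-|n|)$, which is about the same amount of work as the direct series, so neither route is strictly shorter; the paper's choice keeps this lemma self-contained and defers the autocorrelation to where it is genuinely needed.
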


\begin{proof}
(1) and (2) are immediate from the definition~\eqref{eq:Fejer-def}. For (3), we compute on representatives $n\in\{-r+1,\dots,r-1\}$:
\[
\sum_{n\in\ZN} F_r(n)
=\sum_{|n|\le r-1}\frac{1}{r}\Bigl(1-\frac{|n|}{r}\Bigr)
=\frac{1}{r}\Biggl[1+2\sum_{n=1}^{r-1}\Bigl(1-\frac{n}{r}\Bigr)\Biggr].
\]
The inner sum is
\[
\sum_{n=1}^{r-1}\Bigl(1-\frac{n}{r}\Bigr)
=(r-1)-\frac{1}{r}\sum_{n=1}^{r-1} n
=(r-1)-\frac{1}{r}\cdot \frac{(r-1)r}{2}
=\frac{r-1}{2}.
\]
Therefore
\[
\sum_{n\in\ZN} F_r(n)
=\frac{1}{r}\Bigl(1+2\cdot\frac{r-1}{2}\Bigr)
=\frac{1}{r}\cdot r=1.\qedhere
\]
\end{proof}

It is convenient to record the $L^2$ norm of $F_r$.

\begin{lemma}[$L^2$ norm of $F_r$]\label{lem:Fejer-L2}
For $N\ge2$ and $1\le r\le\lfloor N/2\rfloor$,
\[
\|F_r\|_2^2
=\sum_{n\in\ZN} |F_r(n)|^2
=\frac{1}{r^2}\Biggl(1+2\sum_{n=1}^{r-1}\Bigl(1-\frac{n}{r}\Bigr)^2\Biggr)
=\frac{1}{r^2}+\frac{(r-1)(2r-1)}{3r^3}.
\]
In particular,
\[
\|F_r\|_2^2 \le \frac{5}{3r}
\quad\text{and hence}\quad
\|F_r\|_2 \le \sqrt{\frac{5}{3}}\, r^{-1/2}.
\]
\end{lemma}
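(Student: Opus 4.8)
The plan is to reduce the sum over $\ZN$ to a finite sum over the support representatives, evaluate that sum in closed form, and then apply two crude estimates to extract the stated constant. First I would invoke Lemma~\ref{lem:Fejer-basic}: by the support property, $F_r(n)=0$ once $|n|\ge r$, so only the representatives $n\in\{-(r-1),\dots,r-1\}$ contribute to $\sum_{n\in\ZN}|F_r(n)|^2$, and by evenness the contributions of $n$ and $-n$ coincide. Isolating the term $n=0$, where $F_r(0)=1/r$, from the symmetric pairs yields
\[
\|F_r\|_2^2=\frac{1}{r^2}\Biggl(1+2\sum_{n=1}^{r-1}\Bigl(1-\frac{n}{r}\Bigr)^2\Biggr),
\]
which is the middle expression in the statement (and correctly gives $\|F_1\|_2^2=1$ via the empty sum when $r=1$).

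Next I would evaluate the inner sum. Expanding $(1-n/r)^2=1-2n/r+n^2/r^2$ and applying the elementary identities $\sum_{n=1}^{r-1}1=r-1$, $\sum_{n=1}^{r-1}n=\tfrac{(r-1)r}{2}$, and $\sum_{n=1}^{r-1}n^2=\tfrac{(r-1)r(2r-1)}{6}$, the constant and linear terms cancel against one another and one is left with $\sum_{n=1}^{r-1}(1-n/r)^2=\tfrac{(r-1)(2r-1)}{6r}$. Substituting this back and simplifying gives the claimed closed form $\|F_r\|_2^2=\tfrac{1}{r^2}+\tfrac{(r-1)(2r-1)}{3r^3}$.

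For the upper bound I would use two estimates, both valid because $r\ge1$. First, $(r-1)(2r-1)=2r^2-3r+1\le 2r^2$, so that $\tfrac{(r-1)(2r-1)}{3r^3}\le\tfrac{2}{3r}$; second, $\tfrac{1}{r^2}\le\tfrac{1}{r}$. Adding these yields $\|F_r\|_2^2\le\tfrac{1}{r}+\tfrac{2}{3r}=\tfrac{5}{3r}$, and taking square roots gives $\|F_r\|_2\le\sqrt{5/3}\,r^{-1/2}$.

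The computation is entirely routine, so there is no serious obstacle; the only points requiring a little care are the bookkeeping of the $n=0$ term against the symmetric pairs in the first step, and the choice of crude bounds in the last step so that the constant lands exactly at $5/3$ rather than something larger. I would note in passing that the exact value collapses even further to $\|F_r\|_2^2=\tfrac{2}{3r}+\tfrac{1}{3r^3}$, which already shows $\|F_r\|_2^2\le 1/r$; the stated $5/(3r)$ is a clean, slightly looser bound that is all the application requires.
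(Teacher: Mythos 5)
Your proof is correct and follows essentially the same route as the paper: reduce to the symmetric finite sum over the support, evaluate the inner sum via standard power-sum identities (the paper substitutes $j=r-n$ rather than expanding the square, but this is immaterial), and apply the same two crude bounds $(r-1)(2r-1)\le 2r^2$ and $1/r^2\le 1/r$. Your closing observation that the exact value simplifies to $\tfrac{2}{3r}+\tfrac{1}{3r^3}\le\tfrac{1}{r}$ is also correct and slightly sharper than the stated $5/(3r)$, though the paper does not record it.
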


\begin{proof}
We compute as in the proof of Lemma~\ref{lem:Fejer-basic}:
\[
\|F_r\|_2^2
=\sum_{|n|\le r-1}\frac{1}{r^2}\Bigl(1-\frac{|n|}{r}\Bigr)^2
=\frac{1}{r^2}\Biggl(1+2\sum_{n=1}^{r-1}\Bigl(1-\frac{n}{r}\Bigr)^2\Biggr).
\]
Setting $j=r-n$, we have
\[
\sum_{n=1}^{r-1}\Bigl(1-\frac{n}{r}\Bigr)^2
=\sum_{j=1}^{r-1}\Bigl(\frac{j}{r}\Bigr)^2
=\frac{1}{r^2}\sum_{j=1}^{r-1} j^2
=\frac{1}{r^2}\cdot\frac{(r-1)r(2r-1)}{6}.
\]
Thus
\[
\|F_r\|_2^2
=\frac{1}{r^2}+\frac{2}{r^2}\cdot\frac{(r-1)r(2r-1)}{6r^2}
=\frac{1}{r^2}+\frac{(r-1)(2r-1)}{3r^3}.
\]
For the inequality, note that $(r-1)(2r-1)\le 2r^2$ for all $r\ge1$, giving
\[
\|F_r\|_2^2
\le \frac{1}{r^2}+\frac{2r^2}{3r^3}
=\frac{1}{r^2}+\frac{2}{3r}
\le \frac{5}{3r},
\]
whence the stated bound on $\|F_r\|_2$.
\end{proof}

\begin{remark}[Asymptotic behavior]
The bound $\|F_r\|_2^2 \le \frac{5}{3r}$ derived above is valid for all $r\ge 1$. However, for large smoothing parameters $r$, the exact formula in Lemma~\ref{lem:Fejer-L2} is dominated by the term $\frac{2r^2}{3r^3}$. Thus, asymptotically, one has $\|F_r\|_2 \sim \sqrt{\frac{2}{3}}\,r^{-1/2}$. This suggests that for large $r$, the constant in Theorem~\ref{thm:L2-Linfty} and Corollary~\ref{cor:smoothed-discrepancy} effectively improves from $\sqrt{5/3}\approx 1.29$ to $\sqrt{2/3}\approx 0.82$.
\end{remark}

\section{Fourier symbol and positivity}

We now compute the Fourier transform of $F_r$ and verify that it is a nonnegative symbol bounded by $1$. The key observation is that $F_r$ can be written as a normalized autocorrelation of a discrete boxcar window.

\begin{lemma}[Autocorrelation representation]\label{lem:autocorr}
Let $N\ge2$ and $1\le r\le\lfloor N/2\rfloor$. Define $g:\ZN\to\{0,1\}$ by
\[
g(n) := \mathbf{1}_{\{0,\dots,r-1\}}(n) \qquad (n\in\ZN).
\]
Then for all $n\in\ZN$,
\[
F_r(n) = \frac{1}{r^2}\sum_{m\in\ZN} g(m) g(n+m).
\]
\end{lemma}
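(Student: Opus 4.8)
The plan is to evaluate the right-hand side by interpreting the autocorrelation sum combinatorially and then matching it against the definition of $F_r$. Write $C(n) := \sum_{m\in\ZN} g(m)g(n+m)$ and set $W := \{0,\dots,r-1\}$, so that $g = \mathbf{1}_W$. Each summand $g(m)g(n+m)$ equals $1$ precisely when $m\in W$ and $n+m\in W$ (as residues mod $N$), and is $0$ otherwise. Hence $C(n)$ counts those $m\in W$ whose shift $n+m$ also lands in $W$; equivalently $C(n) = \#\bigl(W \cap (W-n)\bigr)$, the overlap of the window with its translate by $-n$. The goal is then to show $C(n) = r - |n|$ for $|n|\le r-1$ and $C(n)=0$ for $|n|\ge r$, after which dividing by $r^2$ reproduces \eqref{eq:Fejer-def}.

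The computation splits by the size of $|n|$. First I would treat a representative $0\le n\le r-1$: for $m\in\{0,\dots,r-1-n\}$ the shifted point $n+m$ lies in $\{n,\dots,r-1\}\subseteq W$ with no wraparound, contributing $r-n$ admissible values; for $m\in\{r-n,\dots,r-1\}$ the shifted point $n+m$ lies in $\{r,\dots,r-1+n\}$, which I must show avoids $W$. The symmetric case of a negative representative is handled identically (or deduced from the evenness $C(-n)=C(n)$, which follows from the substitution $m\mapsto m+n$ in the defining sum). Finally, for $r\le |n|\le N/2$ I would show the translate $W-n$ is disjoint from $W$, giving $C(n)=0$, matching the vanishing of $F_r$ on $\{|n|\ge r\}$.

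The main obstacle is the bookkeeping of wraparound in $\ZN$, and this is exactly where the hypothesis $r\le\lfloor N/2\rfloor$ enters. In the case $0\le n\le r-1$ the overflow residues $n+m\in\{r,\dots,r-1+n\}$ satisfy $r-1+n\le 2r-2\le N-2$, so they are genuine residues in $\{r,\dots,N-1\}$ and cannot equal any element of $W$; without $r\le N/2$ they could wrap past $N$ and re-enter $W$, inflating the count. The same length bound rules out overlap when $|n|\ge r$. Once these disjointness facts are in place, the count $C(n)=\max(0,\,r-|n|)$ is immediate, and the identity $\frac{1}{r^2}C(n) = \frac1r\max\bigl(0,1-\tfrac{|n|}{r}\bigr) = F_r(n)$ follows at once.
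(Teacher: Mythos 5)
Your proposal is correct and follows essentially the same route as the paper: both interpret the sum as counting the overlap of the window $\{0,\dots,r-1\}$ with its translate, obtain the count $\max(0,\,r-|n|)$, and use $r\le\lfloor N/2\rfloor$ to rule out wraparound before dividing by $r^2$ to match \eqref{eq:Fejer-def}. Your treatment of the wraparound bookkeeping is, if anything, slightly more explicit than the paper's.
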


\begin{proof}
The sum $\sum_m g(m)g(n+m)$ counts the number of pairs $(m,\ell)$ with
\[
m\in\{0,\dots,r-1\}, \quad \ell:=n+m\in\{0,\dots,r-1\},
\]
i.e.\ pairs $(m,\ell)\in\{0,\dots,r-1\}^2$ with $\ell-m\equiv n\pmod N$. Since $r\le N/2$, we may identify $n$ with an integer in $\{-(N-1)/2,\dots,(N-1)/2\}$. For $|n|\ge r$ there are no such pairs, so the sum is zero and $F_r(n)=0$ by definition. For $|n|\le r-1$, a simple counting argument shows that there are exactly $r-|n|$ solutions to $\ell-m=n$ with $m,\ell\in\{0,\dots,r-1\}$, namely
\[
(m,\ell)=(0,n),\ (1,1+n),\ \dots,\ (r-1-|n|,r-1)
\]
when $n\ge0$, and a similar list when $n<0$. Thus
\[
\sum_{m} g(m) g(n+m)=r-|n| \quad\text{for } |n|\le r-1,
\]
and zero otherwise. Comparing with the defining formula
\[
F_r(n)=\frac{1}{r}\Bigl(1-\frac{|n|}{r}\Bigr)
=\frac{r-|n|}{r^2} \quad(|n|\le r-1)
\]
shows that indeed $F_r(n)=(1/r^2)\sum_m g(m)g(n+m)$ for all $n$.
\end{proof}

Using this representation, the Fourier transform and its basic properties follow at once.

\begin{proposition}[Fourier symbol and bounds]\label{prop:Fejer-symbol}
Let $N\ge2$ and $1\le r\le\lfloor N/2\rfloor$. Then for $k\in\ZN$,
\[
\widehat{F_r}(k)
=\sum_{n\in\ZN}F_r(n)e^{-2\pi i kn/N}
=\begin{cases}
\displaystyle\Biggl(\frac{\sin(\pi r k/N)}{r\,\sin(\pi k/N)}\Biggr)^{\!2}, & k\not\equiv 0\pmod N,\\[10pt]
1, & k\equiv 0\pmod N.
\end{cases}
\]
In particular, $\widehat{F_r}(k)\ge0$ and $\widehat{F_r}(k)\le1$ for all $k\in\ZN$.
\end{proposition}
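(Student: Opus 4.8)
The plan is to leverage the autocorrelation representation of Lemma~\ref{lem:autocorr}, which expresses $F_r$ as $1/r^2$ times the autocorrelation of the boxcar $g=\mathbf{1}_{\{0,\dots,r-1\}}$. The first step is to recognize this autocorrelation as a convolution: writing $\tilde g(m):=g(-m)$, the substitution $m\mapsto -m$ gives $\sum_m g(m)g(n+m)=(g*\tilde g)(n)$. Since convolution corresponds to pointwise multiplication of Fourier transforms and $\widehat{\tilde g}(k)=\overline{\widehat g(k)}$ (because $g$ is real-valued), this immediately yields
\[
\widehat{F_r}(k)=\frac{1}{r^2}\,\widehat g(k)\,\overline{\widehat g(k)}=\frac{1}{r^2}\,\bigl|\widehat g(k)\bigr|^2.
\]

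Next I would compute $\widehat g(k)=\sum_{n=0}^{r-1}e^{-2\pi i kn/N}$ directly as a finite geometric series. For $k\equiv 0\pmod N$ every term equals $1$, so $\widehat g(0)=r$ and hence $\widehat{F_r}(0)=1$, matching the claimed value. For $k\not\equiv 0\pmod N$ the common ratio $e^{-2\pi i k/N}$ is not equal to $1$, so summing the series gives $\widehat g(k)=(1-e^{-2\pi i kr/N})/(1-e^{-2\pi i k/N})$.

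The key manipulation is then to factor each difference of exponentials via the identity $1-e^{-i\theta}=2i\sin(\theta/2)\,e^{-i\theta/2}$. Applying this to numerator and denominator, the factors $2i$ cancel and one is left with a pure phase times a ratio of sines,
\[
\widehat g(k)=e^{-i\pi k(r-1)/N}\,\frac{\sin(\pi r k/N)}{\sin(\pi k/N)},
\]
and taking the squared modulus kills the phase, producing $\bigl|\widehat g(k)\bigr|^2=\sin^2(\pi rk/N)/\sin^2(\pi k/N)$. Dividing by $r^2$ gives the asserted formula. I expect this half-angle factorization to be the only genuinely delicate point, chiefly in bookkeeping the phases and in checking that $\sin(\pi k/N)\ne 0$ exactly when $k\not\equiv 0\pmod N$, which holds because the least residue satisfies $0<|k|<N$.

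Finally, both bounds come for free. Nonnegativity is immediate from $\widehat{F_r}(k)=|\widehat g(k)|^2/r^2\ge 0$. For the upper bound, the triangle inequality gives $|\widehat g(k)|\le\sum_{n=0}^{r-1}1=r$, whence $\widehat{F_r}(k)\le 1$; alternatively one may invoke Lemma~\ref{lem:Fejer-basic}, since $F_r\ge 0$ with unit mass forces $|\widehat{F_r}(k)|\le\sum_{n\in\ZN}F_r(n)=1$.
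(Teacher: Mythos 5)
Your proposal is correct and follows essentially the same route as the paper: both reduce to $\widehat{F_r}(k)=|\widehat g(k)|^2/r^2$ via the autocorrelation representation (you via the convolution theorem applied to $g*\tilde g$, the paper by directly interchanging the double sum, which is a cosmetic difference), then evaluate the geometric series with the half-angle factorization and deduce the bounds from the squared modulus and the triangle inequality. No gaps.
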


\begin{proof}
By Lemma~\ref{lem:autocorr},
\[
\widehat{F_r}(k)
=\sum_{n}F_r(n)e^{-2\pi i kn/N}
=\frac{1}{r^2}\sum_{n}\sum_{m} g(m) g(n+m) e^{-2\pi i kn/N}.
\]
Interchanging the order of summation and using a change of variables $n\mapsto n-m$ gives
\[
\widehat{F_r}(k)
=\frac{1}{r^2}\sum_{m} g(m)\sum_{n} g(n) e^{-2\pi i k(n-m)/N}
=\frac{1}{r^2}\Bigl(\sum_{n} g(n)e^{-2\pi i kn/N}\Bigr)\Bigl(\sum_{m}g(m)e^{2\pi i km/N}\Bigr).
\]
Thus
\[
\widehat{F_r}(k)=\frac{1}{r^2}\,|\widehat g(k)|^2,
\]
where $\widehat g(k)=\sum_{n=0}^{r-1} e^{-2\pi i kn/N}$ is the DFT of the boxcar $g$. A direct geometric series computation yields, for $k\not\equiv0\pmod N$,
\[
\widehat g(k)
=\sum_{n=0}^{r-1} e^{-2\pi i kn/N}
=e^{-\pi i k(r-1)/N}\, \frac{\sin(\pi r k/N)}{\sin(\pi k/N)},
\]
so that
\[
|\widehat g(k)|^2
=\Biggl(\frac{\sin(\pi r k/N)}{\sin(\pi k/N)}\Biggr)^{\!2}.
\]
Hence, for $k\not\equiv0$,
\[
\widehat{F_r}(k)
=\frac{1}{r^2}|\widehat g(k)|^2
=\Biggl(\frac{\sin(\pi r k/N)}{r\,\sin(\pi k/N)}\Biggr)^{\!2}.
\]
For $k\equiv0\pmod N$, we have $\widehat g(0)=r$, so $\widehat{F_r}(0)=r^2/r^2=1$, in agreement with the stated formula by continuity of the right-hand side as $k\to0$.

Nonnegativity $\widehat{F_r}(k)\ge0$ is evident from the expression as a squared modulus. The upper bound $\widehat{F_r}(k)\le1$ follows from
\[
|\widehat g(k)| \le \sum_{n=0}^{r-1}1=r,
\]
so $|\widehat g(k)|^2/r^2\le1$ for all $k$.
\end{proof}

\begin{remark}
Thus $f\mapsto f*F_r$ is a convolution operator with real, nonnegative, symmetric kernel and a nonnegative Fourier multiplier $\widehat{F_r}(k)\in[0,1]$. In particular, it is a positive semidefinite contraction on $\ell^2(\ZN)$, a property frequently exploited in harmonic analysis and analytic number theory; see, for example, \cite[Ch.~VII]{MontVaughan} or \cite[Ch.~1]{TaoVu}.
\end{remark}

\section{An $L^2\to L^\infty$ bound and smoothed discrepancy}

We now record a simple $L^2\to L^\infty$ bound for smoothing by $F_r$, with an explicit constant coming from Lemma~\ref{lem:Fejer-L2}. The bound is elementary, but useful in applications where only an $L^2$ control of the input function is available.

\begin{theorem}[$L^2\to L^\infty$ estimate]\label{thm:L2-Linfty}
Let $N\ge2$ and $1\le r\le\lfloor N/2\rfloor$. For any $f\in\ell^2(\ZN)$,
\[
\|f*F_r\|_\infty \le \|f\|_2 \|F_r\|_2
\le \sqrt{\frac{5}{3}}\, r^{-1/2} \|f\|_2.
\]
\end{theorem}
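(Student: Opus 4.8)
The plan is to prove the two inequalities separately, starting with the pointwise bound and then invoking the already-computed $L^2$ norm of $F_r$.

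First I would establish the first inequality $\|f*F_r\|_\infty \le \|f\|_2\|F_r\|_2$ by estimating a single value $(f*F_r)(n)$ for an arbitrary $n\in\ZN$. Writing out the convolution as $(f*F_r)(n)=\sum_{m\in\ZN} f(n-m)F_r(m)$, I recognize this as an inner product in $\ell^2(\ZN)$ between the function $m\mapsto f(n-m)$ and the function $m\mapsto \overline{F_r(m)}$. Since $F_r$ is real-valued, $\overline{F_r(m)}=F_r(m)$, and by the Cauchy--Schwarz inequality
\[
|(f*F_r)(n)| \le \Bigl(\sum_{m\in\ZN} |f(n-m)|^2\Bigr)^{1/2}\Bigl(\sum_{m\in\ZN} |F_r(m)|^2\Bigr)^{1/2}.
\]
The key point is that the first factor is exactly $\|f\|_2$: as $m$ ranges over $\ZN$, so does $n-m$, so $\sum_{m} |f(n-m)|^2 = \sum_{m'} |f(m')|^2 = \|f\|_2^2$. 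The second factor is $\|F_r\|_2$. Since this bound holds uniformly in $n$, taking the supremum over $n$ yields $\|f*F_r\|_\infty \le \|f\|_2\|F_r\|_2$.

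The second inequality is then immediate: I would simply substitute the bound $\|F_r\|_2 \le \sqrt{5/3}\,r^{-1/2}$ from Lemma~\ref{lem:Fejer-L2} into the first inequality. No further computation is needed.

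I do not expect any serious obstacle here, as the argument is a textbook application of Cauchy--Schwarz combined with the translation-invariance of the counting measure on $\ZN$. The only point requiring a moment's care is the reindexing $n-m \mapsto m'$, which relies on the fact that translation is a bijection of $\ZN$ onto itself; this is what makes the $\ell^2$ norm of the translated function equal to that of $f$ itself. One could alternatively frame the result as the special case $p=q=2$ of Young's convolution inequality $\|f*g\|_\infty \le \|f\|_p\|g\|_q$ with $1/p+1/q=1$, but the direct Cauchy--Schwarz argument is cleaner and entirely self-contained in the finite setting.
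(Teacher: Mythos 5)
Your proof is correct and is essentially identical to the paper's own argument: pointwise Cauchy--Schwarz on the convolution sum, the observation that $\sum_m|f(n-m)|^2=\|f\|_2^2$ by translation invariance, and then substitution of the bound $\|F_r\|_2\le\sqrt{5/3}\,r^{-1/2}$ from Lemma~\ref{lem:Fejer-L2}. No issues.
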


\begin{proof}
The inequality $\|f*F_r\|_\infty\le\|f\|_2\,\|F_r\|_2$ is simply the Cauchy--Schwarz inequality. Indeed, for each $n\in\ZN$,
\[
|(f*F_r)(n)|
=\Bigl|\sum_{m\in\ZN} f(n-m) F_r(m)\Bigr|
\le\Bigl(\sum_{m}|f(n-m)|^2\Bigr)^{1/2}
     \Bigl(\sum_{m}|F_r(m)|^2\Bigr)^{1/2}
=\|f\|_2 \|F_r\|_2.
\]
Taking the maximum over $n$ gives the first inequality. The second inequality follows from Lemma~\ref{lem:Fejer-L2}.
\end{proof}

A natural way to view Theorem~\ref{thm:L2-Linfty} is as a smoothed discrepancy bound. Suppose $A\subset\ZN$ is a subset, and consider the mean-zero function
\[
f_A(n) := \mathbf{1}_A(n)-\frac{|A|}{N}, \qquad n\in\ZN.
\]
Then $\sum_{n} f_A(n)=0$ and
\[
\|f_A\|_2^2
=\sum_{n}\Bigl|\mathbf{1}_A(n)-\frac{|A|}{N}\Bigr|^2
=|A|\Bigl(1-\frac{|A|}{N}\Bigr)^2
+(N-|A|)\Bigl(\frac{|A|}{N}\Bigr)^2
=\frac{|A|(N-|A|)}{N}.
\]
In particular, $\|f_A\|_2\le\frac{\sqrt{N}}{2}$ for all $A$.

Convolution with $F_r$ then measures how the \emph{local} density of $A$ near a point $n$ deviates from the global density $|A|/N$ after smoothing on a scale of order $r$.

\begin{corollary}[Smoothed discrepancy for indicator functions]\label{cor:smoothed-discrepancy}
Let $N\ge2$, $1\le r\le\lfloor N/2\rfloor$, and let $A\subset\ZN$. Define $f_A$ as above and $g_A:=f_A*F_r$. Then for all $n\in\ZN$,
\[
|g_A(n)|
=\Bigl|\sum_{m\in\ZN} \Bigl(\mathbf{1}_A(n-m)-\frac{|A|}{N}\Bigr) F_r(m)\Bigr|
\le \sqrt{\frac{5}{3}}\, r^{-1/2} \|f_A\|_2
\le \sqrt{\frac{5}{12}}\, r^{-1/2} N^{1/2}.
\]
Equivalently,
\[
\Bigl|\sum_{m\in\ZN} \mathbf{1}_A(n-m) F_r(m)
-\frac{|A|}{N}\Bigr|
\ll r^{-1/2} N^{1/2},
\]
where the implied constant is absolute.
\end{corollary}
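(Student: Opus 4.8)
The plan is to obtain Corollary~\ref{cor:smoothed-discrepancy} as a direct specialization of Theorem~\ref{thm:L2-Linfty} to the mean-zero function $f_A$. First I would unwind the definition of the circular convolution to write
\[
g_A(n) = (f_A * F_r)(n) = \sum_{m\in\ZN}\Bigl(\mathbf{1}_A(n-m)-\frac{|A|}{N}\Bigr)F_r(m),
\]
which gives the first displayed equality. The first inequality is then immediate: applying Theorem~\ref{thm:L2-Linfty} with $f=f_A$ yields $\|g_A\|_\infty \le \sqrt{5/3}\,r^{-1/2}\|f_A\|_2$, and evaluating at an arbitrary $n$ gives the stated pointwise bound. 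I would remark that the mean-zero hypothesis is not logically required for this inequality, since Theorem~\ref{thm:L2-Linfty} holds for \emph{every} $f\in\ell^2(\ZN)$; it serves only to make $g_A(n)$ interpretable as a local-versus-global density discrepancy.

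The second inequality then reduces to a uniform bound on $\|f_A\|_2$. The identity $\|f_A\|_2^2 = |A|(N-|A|)/N$ has already been recorded in the discussion preceding the statement, so I would simply maximize the right-hand side over the admissible sizes $|A|\in\{0,1,\dots,N\}$. Since $t\mapsto t(N-t)/N$ is a downward parabola attaining its maximum $N/4$ at $t=N/2$, this gives $\|f_A\|_2\le \sqrt{N}/2$. Substituting into the first inequality and simplifying $\sqrt{5/3}\cdot\tfrac12 = \sqrt{5/12}$ produces the claimed constant in the bound $\sqrt{5/12}\,r^{-1/2}N^{1/2}$.

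Finally, for the reformulation in the last display, I would invoke the unit-mass property (Lemma~\ref{lem:Fejer-basic}(3)) to peel off the constant term: since $\sum_m F_r(m)=1$, one has $\sum_m (|A|/N)F_r(m) = |A|/N$, so
\[
\sum_{m\in\ZN}\mathbf{1}_A(n-m)F_r(m)-\frac{|A|}{N} = g_A(n).
\]
The $\ll$ estimate follows because $\sqrt{5/12}$ is an absolute numerical constant. I do not anticipate any genuine obstacle here: the whole argument is Cauchy--Schwarz (already absorbed into Theorem~\ref{thm:L2-Linfty}) combined with a single one-variable optimization, and the only point demanding slight care is the bookkeeping of the explicit constant through the chain of inequalities.
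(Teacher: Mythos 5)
Your proposal is correct and follows essentially the same route as the paper: apply Theorem~\ref{thm:L2-Linfty} to $f_A$ and then use $\|f_A\|_2\le\sqrt{N}/2$, which the paper derives from the identity $\|f_A\|_2^2=|A|(N-|A|)/N$ in the discussion preceding the corollary. Your additional remarks (the one-variable maximization giving $N/4$, and the use of the unit-mass property to justify the ``equivalently'' display) are just slightly more explicit versions of steps the paper leaves implicit.
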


\begin{proof}
The first inequality follows from Theorem~\ref{thm:L2-Linfty} applied to $f_A$. The second inequality uses $\|f_A\|_2\le\sqrt{N}/2$.
\end{proof}

\begin{remark}
One may interpret $\sum_m \mathbf{1}_A(n-m) F_r(m)$ as a Fej\'er-type smoothed local density of $A$ around the point $n$, with averaging window of size $\sim r$. Corollary~\ref{cor:smoothed-discrepancy} then states that, in the worst case, the deviation of this local smoothed density from the global density $|A|/N$ is controlled by $O(r^{-1/2}N^{1/2})$. This is consistent with the heuristic that smoothing over a window of size $r$ reduces fluctuations by a factor of order $r^{1/2}$ when only $L^2$ control is available.
\end{remark}

\begin{remark}[Relation to interval discrepancy]
If one is interested in sharp interval discrepancy
\[
D(A,r) := \max_{I}\Bigl||A\cap I|-\frac{|A|}{N}|I|\Bigr|,
\]
where $I$ ranges over intervals of length $O(r)$ in $\ZN$, then one may relate $D(A,r)$ to the smoothed quantities $g_A(n)$ in Corollary~\ref{cor:smoothed-discrepancy} by comparing the sharp interval indicator with a suitable translate of the triangular kernel $F_r$. This introduces an additional error of at most $1$ due to the discrete boundary, a phenomenon that is well known in the use of smoothing kernels; see, for instance, \cite[\S~VII.2]{MontVaughan}. We do not pursue this direction here, since our main focus is the $L^2\to L^\infty$ bound.
\end{remark}

\section{A small numerical illustration}

For completeness, we briefly illustrate the behavior of the bound from Theorem~\ref{thm:L2-Linfty} on a small example. Let $N=101$ and choose a subset $A\subset\ZN$ of size $|A|=50$ at random. Define $f_A$ and $g_A=f_A*F_r$ as above. For $r\in\{5,10,20\}$, one observes numerically that
\[
\|g_A\|_\infty \approx c(r)\, r^{-1/2}N^{1/2}
\]
with $c(r)$ bounded and typically significantly smaller than the worst-case constant $\sqrt{5/12}\approx0.645$ appearing in Corollary~\ref{cor:smoothed-discrepancy}. This is in line with the fact that Corollary~\ref{cor:smoothed-discrepancy} is uniform in $A$ and does not exploit any additional structure.

We omit detailed tables, as they depend on the particular choice of $A$, but such computations can be carried out straightforwardly in any numerical environment that supports discrete convolution and random subset generation.

\section{Concluding remarks}

We have recorded a finite-group version of a Fej\'er-type triangular kernel on $\ZN$, together with a simple $L^2\to L^\infty$ estimate and a corresponding smoothed discrepancy bound for mean-zero functions. The arguments are entirely elementary and rely only on basic discrete Fourier analysis and Cauchy--Schwarz.

The same methods apply, with minor modifications, to other positive, finitely supported kernels on $\ZN$, as well as to higher-dimensional tori $(\Z/N\Z)^d$. One can also combine such smoothing kernels with more refined tools such as the large sieve inequality to obtain sharper estimates in specific arithmetic applications; see, for example, \cite{MontVaughan} or the discussion of discrete convolution methods in \cite{TaoVu}.


\begin{thebibliography}{9}

\bibitem{MontVaughan}
H.~L.~Montgomery and R.~C.~Vaughan,
\emph{Multiplicative Number Theory I: Classical Theory},
Cambridge Studies in Advanced Mathematics, vol.~97,
Cambridge University Press, 2007.

\bibitem{Zygmund}
A.~Zygmund,
\emph{Trigonometric Series}, 2nd ed.,
Cambridge University Press, 1959.

\bibitem{TaoVu}
T.~Tao and V.~Vu,
\emph{Additive Combinatorics},
Cambridge Studies in Advanced Mathematics, vol.~105,
Cambridge University Press, 2006.

\end{thebibliography}
\end{document}